\newcommand{\C}{\mathbb{C}}
\newcommand{\GL}{\text{GL}}
\newcommand{\Q}{\mathbb{Q}}
\newcommand{\R}{\mathbb{R}}
\newcommand{\Z}{\mathbb{Z}}
\renewcommand{\H}{\mathbb{H}}
\newcommand{\F}{\mathbb{F}}
\DeclareMathOperator{\Aut}{Aut}
\DeclareMathOperator{\rank}{rank}
\DeclareMathOperator{\LF}{LF}
\DeclareMathOperator{\im}{Im}
\DeclareMathOperator{\coker}{coker}
\DeclareMathOperator{\Wh}{Wh}
\DeclareMathOperator{\tors}{tors}
\DeclareMathOperator{\rk}{rk}
\newtheorem{thm}{Theorem}[section]
\newtheorem{prop}[thm]{Proposition}
\newtheorem{lem}[thm]{Lemma}
\newtheorem*{D2}{D2 problem}
\newtheorem{question}[thm]{Question}
\theoremstyle{definition}
\theoremstyle{remark}
\newtheorem{remark}[thm]{Remark}
\newcounter{tmp}
\begin{document}

\title{A cancellation theorem for modules over integral group rings}

\author{John Nicholson}
\address{Department of Mathematics, UCL, Gower Street, London, WC1E 6BT, U.K.}
\email{j.k.nicholson@ucl.ac.uk}

\subjclass[2020]{Primary 20C05; Secondary 19B28, 57M60}


\begin{abstract}
A long standing problem, which has its roots in low-dimensional homotopy theory, is to classify all finite groups $G$ for which the integral group ring $\Z G$ has stably free cancellation (SFC).
We extend results of R. G. Swan by giving a condition for SFC and use this to show that $\Z G$ has SFC provided at most one copy of the quaternions $\H$ occurs in the Wedderburn decomposition of the real group ring $\R G$.\,\,This generalises the Eichler condition in the case of integral group rings.
\vspace{-5mm}
\end{abstract}

\maketitle


\section*{Introduction}

A ring $R$ is said to have \textit{stably free cancellation} (SFC) if $M \oplus R^n \cong R^{n+m}$ implies $M \cong R^m$ for finitely generated modules $M$ or, equivalently, if $R$ has no non-trivial stably free modules.
In this paper, we will be interested in the problem of determining which finite groups $G$ have the property that the integral group ring $\Z G$ has SFC. 

Fix a finite group $G$ once and for all, and recall that the real group ring $\R G$ is semisimple and so admits a Wedderburn decomposition
\[ \R G \cong M_{n_1}(D_1) \times \cdots \times M_{n_r}(D_r)\]
where $n_1, \cdots, n_r$ are integers $\ge 1$ and each $D_i$ is one of the real division algebras $\R$, $\C$ or $\H$. We say that $\Z G$ satisfies the \textit{Eichler condition} if $D_i \ne \H$ whenever $n_i=1$. By the Jacobinski cancellation theorem \cite{Sw70}, this is a sufficient condition for $\Z G$ to have SFC.

It is well-known, and is discussed in Section 1, that $\Z G$ fails the Eichler condition precisely when $G$ has a quotient which is a binary polyhedral group, i.e. a non-cyclic finite subgroup of $\H^\times$. They are the generalised quaternion groups $Q_{4n}$ for $n \ge 2$ or one of $\widetilde{T}, \widetilde{O}, \widetilde{I}$, the binary tetrahedral, octahedral and icosahedral groups.
It was shown by Swan \cite{Sw83} that, if $G$ is a binary polyhedral group, then $\Z G$ has SFC if and only if $G$ is not of the form $Q_{4n}$ for $n \ge 6$, i.e. if $G$ is one of the groups
\[\tag{*} Q_8,Q_{12},Q_{16},Q_{20},\widetilde{T},\widetilde{O},\widetilde{I}.\] 

It follows from a result of Swan-Fr\"{o}hlich \cite[Theorem A10]{Sw83} that, if $G$ has a quotient $H$ and $\Z G$ has SFC, then $\Z H$ has SFC also. 
In particular, $\Z G$ does not have SFC whenever $G$ has a quotient which is $Q_{4n}$ for $n \ge 6$. Note that this does not yet characterise which groups have SFC; it remains to determine SFC for $\Z G$ when $G$ has a quotient in one of the groups in $(*)$ but none in $Q_{4n}$ for $n \ge 6$.

Some results in this direction were obtained by Swan where he showed that, if $G$ is one of the groups in $(*)$ and $H$ satisfies the Eichler condition, then $\Z[G \times H]$ has SFC provided $H$ has no normal subgroup of index 2.	
He also proved that $\Z[\widetilde{T}^n \times \widetilde{I}^m]$ has SFC for all $n, m \ge 0$ and that $\Z[Q_8 \times C_2]$ does not have SFC.

The main aim of this paper is to extend these results to a large class of new groups, including many which do not split as direct products. 

Define the \textit{$\H$-multiplicity} $m_{\H}(G)$ to be the number of copies of $\H$ in the Wedderburn decomposition of $\R G$, i.e. the number of irreducible one-dimensional quaternionic representations, so that $G$ satisfies the Eichler condition if and only if $m_{\H}(G)=0$.
Recall also from algebraic K-theory that
$K_1(R)=\GL(R)^{ab}$
where $\GL(R) = \bigcup_n \GL_n(R)$ with respect to the inclusions $\GL_n(R) \hookrightarrow \GL_{n+1}(R)$.
We will say $K_1(R)$ is \textit{represented by units} if the natural map $R^\times = \GL_1(R) \hookrightarrow \GL(R) \to K_1(R)$ is surjective.

\begingroup
\setcounter{tmp}{\value{thm}}
\setcounter{thm}{0} 
\renewcommand\thethm{\Alph{thm}}
\begin{thm} \label{thm:A}
Suppose $G$ has a quotient $H$ such that $m_{\H}(G)=m_{\H}(H)$ and $K_1(\Z H)$ is represented by units. Then $\Z G$ has SFC if and only if $\Z H$ has SFC.
\end{thm}
\endgroup

This can be seen as a relative version of the Eichler condition and gives a relation between SFC and the classical problem of determining when $K_1(\Z H)$ is represented by units \cite{JM80}. Whilst unit representation fails in general, $H=Q_{116}$ being an example, we will deduce the following in Section \ref{section:unit-rep-quat-rep} as an application of results in \cite{MOV83}.

\begingroup
\setcounter{tmp}{\value{thm}}
\renewcommand\thethm{\Alph{thm}}
\begin{thm} \label{thm:B}
Suppose $G$ has a binary polyhedral quotient $H$ such that $m_{\H}(G)=m_{\H}(H)$. Then $\Z G$ has SFC if and only if $\Z H$ has SFC.
\end{thm}
\endgroup

This recovers Swan's result for direct products and also proves SFC for infinitely many new groups, the smallest example being the non-trivial semidirect product $C_4 \rtimes C_4$.
We also prove the following generalisation of the Eichler condition to integral group rings.

\begingroup
\setcounter{tmp}{\value{thm}}
\renewcommand\thethm{\Alph{thm}}
\begin{thm} \label{thm:C}
If $m_{\H}(G) \le 1$, then $\Z G$ has SFC.	
\end{thm}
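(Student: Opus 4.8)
The plan is to reduce, using the hypothesis $m_{\H}(G)\le 1$, to a single binary polyhedral quotient and then to quote Theorem B. First I would dispose of the Eichler case: if $G$ has no quotient which is binary polyhedral then, by the characterisation recalled in $\S 1$, $\Z[G]$ satisfies the Eichler condition, so the Jacobinski cancellation theorem gives that $\Z[G]$ has SFC. So assume henceforth that $G$ does have a binary polyhedral quotient. Then the Eichler condition fails, so the Wedderburn decomposition of $\R[G]$ has at least one factor isomorphic to $\H$ (necessarily occurring as $M_1(\H)$), and since $m_{\H}(G)\le 1$ it has exactly one. Equivalently, $\R[G]$ has a unique simple quotient isomorphic to $\H$, so $G$ has, up to isomorphism, a unique irreducible quaternionic representation $\rho\colon G\to\H^\times$; its image is a non-cyclic finite subgroup of $\H^\times$, i.e. $G/\ker\rho$ is binary polyhedral.

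The heart of the proof is the following claim: \emph{the only normal subgroup $N\trianglelefteq G$ with $G/N$ binary polyhedral is $N=\ker\rho$, and $G/\ker\rho$ is isomorphic to one of $Q_8$, $Q_{12}$, $\widetilde T$.} To prove it, let $N'$ be any normal subgroup with $G/N'$ binary polyhedral. Since $\R[G/N']$ is obtained from $\R[G]$ by deleting some Wedderburn factors, $m_{\H}(G/N')\le m_{\H}(G)=1$; as every binary polyhedral group fails the Eichler condition we also have $m_{\H}(G/N')\ge 1$, so $m_{\H}(G/N')=1$. Now one checks the $\H$-multiplicities of the binary polyhedral groups --- for example $\R[Q_{4n}]$ has $\lfloor n/2\rfloor$ quaternionic factors (hence at least two once $n\ge 4$), while $\widetilde O$ and $\widetilde I$ each have at least two --- and finds that $Q_8$, $Q_{12}$ and $\widetilde T$ are the only ones of $\H$-multiplicity one, and that in each of these the unique irreducible quaternionic representation is \emph{faithful}. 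Hence $G/N'\in\{Q_8,Q_{12},\widetilde T\}$, and if $\rho'\colon G/N'\to\H^\times$ denotes its quaternionic representation, then the pullback of $\rho'$ along $G\twoheadrightarrow G/N'$ is again an irreducible quaternionic representation of $G$ (irreducibility, the endomorphism algebra and the Frobenius--Schur indicator are all unchanged under pullback along a surjection), hence is isomorphic to $\rho$. Comparing kernels, and using that $\rho'$ is faithful, gives $N'=\ker\rho$. This proves the claim.

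It remains to conclude. Set $N=\ker\rho$. By the claim, $N$ is a normal subgroup of $G$ contained in every normal subgroup $N'$ with $G/N'$ binary polyhedral (it being the only one), and $H=G/N$ is one of $Q_8,Q_{12},\widetilde T$, each of which occurs in the list $(*)$. Theorem B therefore applies and shows that $\Z[G]$ has SFC, which is what we wanted. The only non-formal ingredient is the classification embedded in the claim --- that $m_{\H}(G)\le1$ forces a \emph{unique} binary polyhedral quotient, lying in Swan's list $(*)$ --- which amounts to computing the $\H$-multiplicities of the binary polyhedral groups and observing that their quaternionic representations of multiplicity one are faithful; this I expect to be the main point requiring care, everything else being a formal combination of Theorem B with the reductions of $\S 1$.
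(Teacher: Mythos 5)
Your proof is correct and follows essentially the same route as the paper: both reduce to Theorem B by observing that $Q_8$, $Q_{12}$, $\widetilde{T}$ are the only binary polyhedral groups of $\H$-multiplicity one, so any binary polyhedral quotient of $G$ is one of these and the kernel of the quaternionic representation satisfies the containment hypothesis of Theorem B. The only cosmetic difference is that you verify that hypothesis directly (faithfulness of the quaternionic representations forces the binary polyhedral quotient to be unique), whereas the paper invokes Lemma 2.1 in the form $m_{\H}(G)=m_{\H}(H)$.
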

\endgroup

In Section \ref{section:towards-full-classification}, we conclude by showing how close these results bring us to a full classification of the finite groups $G$ for which $\Z G$ has SFC. In particular, we formulate a precise question regarding the complexity that the final classification might take.

We will end the introduction with a brief discussion of the motivation for this work, and of some of its applications to topology. Recall the following question of C. T. C. Wall \cite{Wa65}, which remains a significant open problem:

\begin{D2}
Let $X$ be a finite connected CW-complex which is cohomologically $2$-dimensional, i.e. 
\[H_i(X;M)=H^i(X;M)=0\] 
for all $i > 2$ and all $\Z [\pi_1(X)]$ modules $M$. Then is $X$ homotopic to a finite $2$-complex?
\end{D2}

This can be viewed as being parametrised by groups $G$ by saying that $G$ has the D2 property if the answer is yes for all finite complexes $X$ with fundamental group $G$. The D2-property has been actively studied for finite groups but has so far only been proven in a limited number of cases

Results of F. E. A. Johnson \cite{Jo03} have made explicit the link between SFC and the D2 property for groups with $4$-periodic cohomology. For example, if $X$ is a cohomologically $2$-dimensional finite complex such that $\pi_1(X)=G$ admits a free resolution of period $4$, then 
$ \pi_2(X) \oplus \Z G^r \cong I^* \oplus \Z G^s$
for some $s \ge r \ge 0$, where $I^*$ is the dual of the augmentation ideal. It is not difficult to show that SFC implies cancellation in the class of $I$ or $I^*$ and so, in such a case, we would have $\pi_2(X) \cong I^* \oplus \Z G^{s-r}$ which places a significant restriction on the possible homotopy types of such an $X$.

In \cite{Ni19}, we apply the results of this article to show that a group $G$ with periodic cohomology has SFC if and only if $m_{\H}(G) \le 2$. This is used to show that the D2 property is equivalent to the existence of a balanced presentation for $4$-periodic groups $G$ with $m_{\H}(G) \le 2$. In addition, this sheds new light on the old problem of whether or not a Poincar\'{e} $3$-complex has a cellular structure with a single $3$-cell \cite{Wa67}.
For a recent survey on the link between SFC and problems in low-dimensional homotopy theory, see \cite[Chapter 1]{MR17}.


\section{Preliminaries} \label{section:preliminaries}

Throughout this article, for a ring $R$, all $R$ modules will be taken to be finitely generated right $R$ modules and $G$ will be taken to be a finite group.

Let $A$ be a finite-dimensional semisimple $\Q$-algebra and let $\Lambda$ be a $\Z$-order in $A$, i.e. a finitely-generated subring of $A$ such that $\Q \cdot \Lambda = A$. For example, we can take $\Lambda = \Z G$ and $A = \Q G$.
For a prime $p$, let $\Z_p$ denote the $p$-adic integers and let $\Lambda_p = \Lambda \otimes \Z_p$. For $n \ge 1$, we say that a $\Lambda$ module $M$ is \textit{locally free of rank $n$} if $M_p = M \otimes \Z_p$ is a free $\Lambda_p$ module of rank $n$ for all primes $p$.

By \cite[Lemma 2.1]{Sw80}, every locally free $\Lambda$ module is projective. The converse holds in the case where $\Lambda = \Z G$ by \cite[Theorems 2.21 and 4.2]{Sw70}, i.e. a $\Z G$ module is projective if and only if it is locally free. As was noted by Swan in \cite[p156]{Sw80}, most special properties of projective modules over $\Z G$ are consequences of this fact.

Let $\LF_n(\Lambda)$ denote the set of isomorphism classes of locally free $\Lambda$ modules of rank $n$ and let $C(\Lambda)$ denote the locally free class group, i.e. the equivalence classes of locally free $\Lambda$ modules where $P_1 \sim P_2$ if $P_1 \oplus \Lambda^n \cong P_2 \oplus \Lambda^m$ for some $n,m \ge 0$.
The map $P \mapsto P \oplus \Lambda$ induces a sequence
\[
\begin{tikzcd} 
\LF_1(\Lambda) \arrow[r,twoheadrightarrow] & \LF_2(\Lambda) \arrow[r,"\cong"] & \LF_3(\Lambda) \arrow[r,"\cong"] & \cdots \arrow[r,"\cong"] & C(\Lambda)
\end{tikzcd}
\]
where all the maps are surjections by Serre's Theorem and all but the first map are bijections by Bass' Cancellation Theorem \cite[Section 2]{Sw80}. 

For a locally free $\Lambda$ module $P$, it will often be useful to write $[P]$ to denote both the class $[P] \in C(\Lambda)$ and the set of isomorphism classes of all locally free $\Lambda$ modules $P_0$ for which $[P_0] = [P] \in C(\Lambda)$. We can view this set as a tree by placing edges between each $P' \in [P]$ and $P' \oplus \Lambda \in [P]$. By the above, this tree has the structure of a fork: a finite set of vertices for the $P' \in [P]$ of rank $1$ and a single vertex for each $P ' \in [P]$ of rank $>1$ (see Figure \ref{figure:fork} below).

\begin{figure}[h] \vspace{-2mm} \label{figure:fork}
\begin{tikzpicture}
\draw[fill=black] (0,0) circle (2pt);
\draw[fill=black] (1,0) circle (2pt);
\draw[fill=black] (2,0) circle (2pt);
\draw[fill=black] (3,0) circle (2pt);
\draw[fill=black] (4,0) circle (2pt);
\draw[fill=black] (2,1) circle (2pt);
\draw[fill=black] (2,2) circle (2pt);
\draw[fill=black] (2,3) circle (2pt);
\node at (2,3.6) {$\vdots$};
\draw[thick] (0,0) -- (2,1) (1,0) -- (2,1) (2,0) -- (2,1) (3,0) -- (2,1) (4,0) -- (2,1) -- (2,2) -- (2,3);
\end{tikzpicture}
\caption{Tree structure for $[P] \in C(\Lambda)$}
\end{figure}

Hence the stable class map $[\,\cdot\,] : \LF_1(\Lambda) \to  C(\Lambda)$ is surjective and is bijective precisely when $\Lambda$ has locally free cancellation, i.e. cancellation in the class of all locally free modules. Furthermore, $\Lambda$ has SFC precisely when the fibre over $0 \in C(\Lambda)$ is trivial.

We will now recall Swan's refinement of Fr{\"o}hlich's result \cite[III]{Fr75} that, if $G$ has a quotient $H$ and $\Z G$ has projective cancellation, then $\Z H$ has projective cancellation.
This gives one direction of Theorem \ref{thm:A}.

\begin{thm}[Swan-Fr\"{o}hlich] \label{thm:quotient-closed}
If $G$ has a quotient $H$ and $\Z G$ has SFC, then $\Z H$ has SFC.
\end{thm}

\begin{proof}
Since the map $\Z G \twoheadrightarrow \Z H$ of $\Z$-orders induces a surjection $\Q G \twoheadrightarrow \Q H$, we can apply \cite[Theorem A10]{Sw83}. In particular, this shows that
\[
\begin{tikzcd}
\LF_1(\Z G) \arrow[r] \arrow[d, twoheadrightarrow] & \LF_1(\Z H)  \arrow[d,twoheadrightarrow] \\
C(\Z G) \arrow[r] & C(\Z H)
\end{tikzcd}
\]
is a weak pullback in the sense that that compatible elements in the corners have not-necessarily-unique lifts to $\LF_1(\Z G)$. It follows that the fibre in $\LF_1(\Z G)$ over $0 \in C(\Z G)$ maps onto the fibre in $\LF_1(\Z H)$ over $0 \in C(\Z H)$.
\end{proof}

Since $\Lambda_{\R} = \Lambda \otimes \R$ has a real Wedderburn decomposition, the Eichler condition generalises to $\Z$-orders $\Lambda$ in the natural way. 
The following is a consequence of a general cancellation theorem proven by H. Jacobinski \cite[Theorem 4.1]{Ja68} which depends on a deep result of Eichler concerning strong approximation for the kernel of the reduced norm map.

\begin{thm}[Jacobinski] If $\Lambda$ satisfies the Eichler condition, then $\Lambda$ has locally free cancellation.
\end{thm}

The following will be an essential ingredient in the proof of Theorem \ref{thm:A} and can be found in \cite[Corollary 10.5]{Sw80}.

\begin{thm}[Swan] \label{thm:units-K_1}
If $\Lambda$ satisfies the Eichler condition and $I$ is a two-sided ideal of finite index then, with respect to the quotient map $f: \Lambda \twoheadrightarrow \Lambda/I$, we have $f(\Lambda^\times) \unlhd (\Lambda/I)^\times$ and the map of units $(\Lambda/I)^\times \to K_1(\Lambda/I)$ induces an isomorphism:
	\[ \frac{(\Lambda/I)^\times}{\Lambda^\times} \cong \frac{K_1(\Lambda/I)}{K_1(\Lambda)}. \]
\end{thm}

\begin{remark} 
Note that the statement that $(\Lambda/I)^\times \to K_1(\Lambda/I)$ induces this isomorphism is not stated explicitly in \cite{Sw80}. To see this, recall that the proof uses Swan's adaptation of the strong approximation theorem \cite[Theorem 8.1]{Sw80} to compare 
\[\coker(\Lambda^\times \to (\Lambda/I)^\times)\] 
to an appropriate quotient of $\nu(\widehat{\Lambda}^\times)$, where here $\nu$ denotes the reduced norm and $\widehat{\Lambda}=\widehat{\Z} \otimes_{\Z} \Lambda$ is the adelic completion. This is then identified with 
\[\coker(\GL_n(\Lambda) \to \GL_n(\Lambda/I))\] 
by showing that $\nu(\widehat{\Lambda}^\times) = \nu(\GL_n(\widehat{\Lambda}))$. Since $\nu$ is compatible with the inclusion maps $\GL_n(\widehat{\Lambda}) \hookrightarrow \GL_{n+1}(\widehat{\Lambda})$, the map between cokernels is induced by the inclusion of units. The result then follows by letting $n$ tend to  infinity. 
\end{remark}

Finally we include a proof of the following result mentioned in the introduction which reduces the Eichler condition to a purely group-theoretic property.
This is well-known \cite{Sw83} though does not appear to be written down explicitly in the literature except in the backward direction \cite[p305]{CR87}.

As before, we will use \textit{binary polyhedral group} to mean a non-cyclic finite subgroup of $\H^\times$. They are the generalised quaternion groups $Q_{4n}$ for $n \ge 2$ or one of $\widetilde{T}, \widetilde{O}, \widetilde{I}$, the binary tetrahedral, octahedral and icosahedral groups.

\begin{prop} \label{prop:eichler-group}
$\Z G$ fails the Eichler condition if and only if $G$ has a quotient which is a binary polyhedral group.	
\end{prop}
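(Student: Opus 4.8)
The plan is to unwind the definition of the Eichler condition in terms of the Wedderburn decomposition of $\R[G]$ and relate the appearance of a "bad" factor (a copy of $\H$ with $n_i = 1$) to the existence of a binary polyhedral quotient. First I would recall that a simple factor $M_{n_i}(D_i)$ of $\R[G]$ corresponds to an irreducible real representation $\rho_i \colon G \to M_{n_i}(D_i)$, and that this factor is $\H$ with $n_i = 1$ precisely when $\rho_i$ is a $2$-dimensional-over-$\H$... no, precisely when $\image(\rho_i)$ spans a copy of $\H$ inside $\R[G]$, i.e. $\rho_i$ realises $G$ as a subgroup of $\H^\times$ up to the kernel. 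So the natural candidate for the normal subgroup is $N = \Ker(\rho_i)$, and $G/N$ embeds in $\H^\times$; the classification of finite subgroups of $\H^\times$ (they are cyclic, binary dihedral $=$ generalised quaternion, or $\widetilde T, \widetilde O, \widetilde I$) then says $G/N$ is either cyclic or binary polyhedral. I would rule out the cyclic case by observing that a cyclic group has $\R[C_n]$ built only from $\R$ and $\C$ factors, so $G/N$ cyclic would force $D_i = \C$, not $\H$ — contradiction. This gives the forward direction.

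For the converse, suppose $G$ has a binary polyhedral quotient $H = G/N$, so $H \hookrightarrow \H^\times$. The inclusion $H \hookrightarrow \H^\times$ extends $\R$-linearly to a ring map $\R[H] \to \H$ which is surjective (since a non-abelian $H$ spans $\H$), exhibiting $\H$ as a simple quotient of $\R[H]$, hence as a factor $M_{n_i}(D_i)$ with $D_i = \H$ and necessarily $n_i = 1$ because $\dim_\R \H = 4$. Since $\R[H]$ is a quotient algebra of $\R[G]$ (inflation along $G \twoheadrightarrow H$), every simple factor of $\R[H]$ occurs among the simple factors of $\R[G]$; in particular this $\H$-with-$n_i=1$ factor appears in $\R[G]$, so $\Z[G]$ fails the Eichler condition.

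The main obstacle — really the only non-formal input — is the identification in the forward direction of the image of $\rho_i$ with a finite subgroup of $\H^\times$ and the invocation of the classification of such subgroups; one must be a little careful that $\rho_i$ restricted to $G$ lands in the unit group $\H^\times$ (it does, since group elements are invertible) and that its image, together with $\R$-span equal to all of $\H$, forces the image to be non-cyclic exactly when $D_i = \H$ with $n_i=1$. The rest is bookkeeping with Wedderburn decompositions and the fact that inflation turns $\R[H]$ into a quotient of $\R[G]$, so I would keep the writeup short: one paragraph for each direction, citing the classification of finite subgroups of the quaternions (e.g. from the standard references already in the bibliography) rather than reproving it.
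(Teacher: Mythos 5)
Your proposal is correct and follows essentially the same route as the paper: identify the offending $\H$-factor with a representation $G \to \H^\times$ whose image spans $\H$ and hence is a non-cyclic (equivalently here, non-abelian) finite subgroup of $\H^\times$, i.e.\ binary polyhedral, and conversely inflate the embedding $H \hookrightarrow \H^\times$ to a surjection $\R[G] \to \H$ realising $\H$ as a Wedderburn factor. Your phrasings differ only cosmetically (ruling out the cyclic case via commutativity of $\R[C_n]$ rather than non-commutativity of $\H$), so there is nothing substantive to add.
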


\begin{proof}
	If $G$ fails the Eichler condition, the Wedderburn decomposition gives a map $G \to \H^\times$, i.e. a quaternionic representation. Since $G$ is an $\R$-basis for $G$, the image must contain an $\R$-basis for $\H$. Since $\H$ is non-commutative, the image must be non-abelian and so a binary polyhedral group. 
	Conversely, a quotient of $G$ into a binary polyhedral group gives a representation $G \to \H^\times$ which does not split over $\R$ or $\C$ since the image is non-abelian. Hence the representation is irreducible and so represents a term in the Wedderburn decomposition.
\end{proof}


\section{Proof of Theorem A} \label{section:proof-of-main}

Let $N$ be a normal subgroup of $G$ and let $H=G/N$. Suppose $m_{\H}(G)=m_{\H}(H)$ and that $\Z H$ has SFC. Since the other direction was proven in the preceding section, it will suffice to prove that $\Z G$ has SFC subject to the conditions of Theorem A.
Consider the following pullback diagram for $\Z G$ induced by $N$:
\[
\begin{tikzcd}
  \Z G \arrow[r,"i_2"] \arrow[d,"i_1"] & \Lambda \arrow[d,"j_2"] \\
  \Z H \arrow[r,"j_1"] & (\Z/n \Z)[H]
\end{tikzcd}
\]
where $\Lambda = \Z G / (\widehat{N} \cdot \Z G)$, $\widehat{N}= \sum_{g \in N} g$ and $n=|N|$. This is the standard pullback construction for the ring $\Z G$ and trivially intersecting ideals 
\[I= \ker( \Z G \to \Z H) = I(N) \cdot \Z G \quad \text{and} \quad J= \widehat{N} \cdot \Z G,\] 
where $I(N)= \ker(\Z N \to \Z)$ is the augmentation ideal \cite[Example 42.3]{CR87}.

\begin{prop} $\Lambda$ satisfies the Eichler condition.	
\end{prop}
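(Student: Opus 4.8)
The plan is to show that the real algebra $\Lambda_\R := \Lambda \otimes_\Z \R$ has no Wedderburn component isomorphic to $M_1(\H) = \H$, which is precisely the (generalised) Eichler condition for $\Lambda$. The idea is that passing from $\Z[G]$ to $\Lambda$ removes exactly those $\R$-Wedderburn components already visible in $\R[H]$, and by the standing hypothesis $m_\H(G) = m_\H(H)$ every copy of $\H$ in $\R[G]$ is among them.

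First I would apply $-\otimes_\Z \R$ to the pullback square
\[
\begin{tikzcd}
  \Z [G] \arrow[r] \arrow[d] & \Lambda \arrow[d] \\
  \Z[H] \arrow[r] & (\Z/n \Z)[H].
\end{tikzcd}
\]
Since $\R$ is flat over $\Z$ this stays a pullback of rings, and since $(\Z/n\Z)[H] \otimes_\Z \R = 0$ the bottom-right corner vanishes; a fibre product over the zero ring is a direct product, so
\[ \R[G] \;\cong\; \R[H] \times \Lambda_\R \]
as $\R$-algebras. In particular $\Lambda_\R$ is semisimple, being a direct factor of the semisimple ring $\R[G]$, so it has a genuine real Wedderburn decomposition whose multiset of simple factors is obtained from that of $\R[G]$ by deleting the factors of $\R[H]$ with multiplicity; the same argument over $\Q$ exhibits $\Q\otimes\Lambda$ as a direct factor of $\Q[G]$, hence semisimple and separable, so $\Lambda$ is a $\Z$-order of the type to which the Eichler condition applies.

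It then remains to read off $\H$-multiplicities. By definition $m_\H(-)$ counts the factors isomorphic to $\H$ in the real Wedderburn decomposition, so the displayed product gives that the number of $\H$-factors of $\Lambda_\R$ equals $m_\H(G) - m_\H(H)$, which is $0$ by hypothesis (equivalently, by Lemma \ref{cor1}, by the hypothesis of Theorem A that $N$ lies in every normal subgroup with binary polyhedral quotient). A semisimple real algebra fails the Eichler condition exactly when some factor is $M_n(\H)$ with $n=1$, so $\Lambda$ satisfies it. The only real content here is the identification $\R[G] \cong \R[H]\times\Lambda_\R$, i.e. checking that $\widehat N\Z[G]$ rationally cuts out the complement of the block $\Q[H] = \Q[G]\cdot\tfrac1n\widehat N$; this is the step to get right, after which additivity of $m_\H$ across the product and the hypothesis finish the proof immediately.
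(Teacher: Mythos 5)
Your proof is correct and follows essentially the same route as the paper: tensor the Milnor square with $\R$, observe that $(\Z/n\Z)[H]\otimes\R=0$ forces $\R[G]\cong\R[H]\times\Lambda_\R$, and conclude $m_\H(\Lambda_\R)=m_\H(G)-m_\H(H)=0$ by the hypothesis. The extra details you supply (why the pullback degenerates to a product, and the semisimplicity/separability of $\Lambda_\Q$) are correct and are points the paper addresses only in passing or just after the proposition.
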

\begin{proof}
	Since $\R$ is flat, we can apply $\R \otimes -$ to the diagram to get another pullback diagram which induces $\R G \cong \R H \times \Lambda_{\R}$. Hence $m_{\H}(G) = m_{\H}(H)+m_{\H}(\Lambda_{\R})$
which implies that $m_{\H}(\Lambda_{\R})=0$ by the hypotheses on $G$ and $H$.
\end{proof}

Now note that $\Lambda$ is a $\Z$-order in $\Lambda_\Q$, which is a finite-dimensional semisimple $\Q$-algebra since $\Q G \cong \Q H \times \Lambda_\Q$. Hence we can apply Theorem 1.2 to get that $\Lambda$ has SFC. In particular, the maps 
\[\LF_1(\Lambda) \to C(\Lambda) \quad \text{and} \quad \LF_1(\Z H) \to C(\Z H)\]
have trivial fibres over $0 \in C(\Lambda)$ and $0 \in C(\Z H)$ respectively.

Consider the following diagram induced by the maps on locally free modules.
\[
\begin{tikzcd}
\LF_1(\Z G) \arrow[r,"\varphi_1"] \arrow[d, twoheadrightarrow] & \LF_1(\Z H) \times \LF_1(\Lambda) \arrow[d,twoheadrightarrow] \\
C(\Z G) \arrow[r,"\varphi_2"] & C(\Z H) \times C(\Lambda)
\end{tikzcd}
\]
Proving $\Z G$ has SFC now amounts to proving that the fibres of $\varphi_1$ and $\varphi_2$ over $(\Z H, \Lambda)$ are in bijection.
We will now compute each of these fibres in turn.

Firstly observe that the pullback diagram for $\Z G$ above is a Milnor square and so, by the general construction of projectives modules using a Milnor square (see, for example, \cite[Proposition 4.1]{Sw80}), the fibre $\varphi_1^{-1}(\Z H, \Lambda)$ is in one-to-one correspondence with the double coset 
\[ \Z H^\times \backslash (\Z/n\Z)[H]^\times / \Lambda^\times \]
which, by abuse of notation, we will take to mean $i_1(\Z H^\times) \backslash (\Z/n\Z)[H]^\times / i_2(\Lambda^\times)$.

Secondly, for a ring $R$, let $K_0(R)$ be the Grothendieck group of the monoid of isomorphism classes of projective $R$-modules $P(R)$, i.e. the abelian group generated by $[P]$ for $P \in P(R)$ with relations $[P_1 \oplus P_2] = [P_1] \oplus [P_2]$. By \cite[Theorem 3.3]{Mi71}, there is an exact sequence of the form
\[
\begin{tikzcd}
  K_1(\Z H) \times K_1(\Lambda) \rar 
    & K_1 ((\Z/n\Z)[H]) \arrow[r,"\partial"]  &   
  K_0(\Z G) \arrow[r] & K_0(\Z H) \times K_0(\Lambda)
\end{tikzcd}
\]
where all maps other than $\partial$ are functorial, which is a part of the Mayer-Vietoris sequence for the Milnor square above.

Since projective $\Z G$ modules are locally free, the locally free rank induces a surjection $\rk : K_0(\Z G) \to \Z$ and, by \cite[p157]{Sw80}, we have that $C(\Z G) \cong \ker(\rk)$. It is now straightforward to check that $\ker(\varphi_2) \cong \ker(K_0(\Z G) \to K_0(\Z H) \times K_0(\Lambda))$. Hence, by exactness, the fibre $\varphi_2^{-1}(\Z H, \Lambda)$ is in one-to-one correspondence with
\[ \text{Im}(K_1 ((\Z/n\Z)[H]) \to C(\Z G)) \cong \frac{K_1((\Z/n\Z)[H])}{K_1(\Z H) \times K_1(\Lambda)}. \]

Now note that, since $\Lambda$ satisfies the Eichler condition and since we have a quotient map $j_2 : \Lambda \twoheadrightarrow (\Z/n\Z)[H]$ where $(\Z/n\Z)[H]$ is a finite ring, we can apply Theorem \ref{thm:units-K_1} to get that $j_2(\Lambda^\times) \unlhd (\Z/n\Z)[H]^\times$  and that $(\Z/n\Z)[H]^\times \to K_1((\Z/n\Z)[H])$ induces an isomorphism
\[ \frac{(\Z/n\Z)[H]^\times}{\Lambda^\times} \cong \frac{K_1((\Z/n\Z)[H])}{K_1(\Lambda)}.\]

If $\mathcal{A}$ denotes this common abelian group, then this shows that the fibres 
\[\Z H^\times \backslash (\Z/n\Z)[H]^\times / \Lambda^\times \quad \text{and} \quad\frac{K_1((\Z/n\Z)[H])}{K_1(\Z H) \times K_1(\Lambda)}\] 
are in one-to-one correspondence if and only if the two maps $\Z H^\times \to \mathcal{A}$ and $K_1(\Z H) \to \mathcal{A}$ have the same images. 
Consider the following commutative diagram.
\[
\begin{tikzcd}
\Z H^\times \arrow[r] \arrow[d,"\varphi"] & (\Z/n\Z)[H]^\times \arrow[r,twoheadrightarrow] \arrow[d] & \frac{(\Z/n\Z)[H]^\times}{\Lambda^\times} \cong \mathcal{A} \arrow[d,equals] \\
K_1(\Z H) \arrow[r] & K_1((\Z/n\Z)[H]) \arrow[r,twoheadrightarrow] & \frac{K_1((\Z/n\Z)[H])}{K_1(\Lambda)} \cong \mathcal{A}
\end{tikzcd}
\]
If $\psi_1: \Z H^\times \to \mathcal{A}$ denotes the map along the top row and $\psi_2: K_1(\Z H) \to \mathcal{A}$ denotes the map along the bottom row, then commutativity shows that $\psi_1=\psi_2 \circ \varphi$. 

If $K_1(\Z H)$ is represented by units, i.e. $\varphi$ is surjective, then $\im \psi_1 = \im \psi_2$ and so fibres of $\varphi_1, \varphi_2$ over $(\Z H, \Lambda)$ are in bijection. This implies that $\Z G$ has SFC and so completes the proof of Theorem A.


\section{Representation of units and $\H$-multiplicity} \label{section:unit-rep-quat-rep}

In this section, we will show how Theorem \ref{thm:A} can be used to deduce Theorems \ref{thm:B} and \ref{thm:C}.
The question of which finite groups $G$ have the property that $K_1(\Z G)$ is represented by units was studied in detail in \cite{JM80} and \cite{MOV83}. 
The main result we will need is as follows, and can be found in \cite[Theorems 7.15 - 7.18]{MOV83}. Let $h_p = \# C(\Z[\zeta_p])$ be the class number of the $p$th cyclotomic field.

\begin{lem}[Magurn-Oliver-Vaserstein]
\mbox{}
\begin{enumerate}[\normalfont(i)]
\item If $n \ge 3$, then $K_1(\Z Q_{2^n})$ is represented by units
\item If $G = \widetilde{T}, \widetilde{O}, \widetilde{I}$, then $K_1(\Z G)$ is represented by units.
\item If $p$ is prime, then $K_1(\Z Q_{4p})$ is represented by units if and only if $h_p$ is odd.
\item If $p$ is prime with $p \equiv -1\mod 8$, then $K_1(\Z Q_{16p})$ is not represented by units.
\end{enumerate}
\end{lem}

In particular, unit representation fails for binary polyhedral groups in general, the smallest example given by the theorem above being $K_1(\Z Q_{116})$. It is, however, well-known that $\Z[\zeta_3]$ and $\Z[\zeta_5]$ are principal ideal domains and so $h_3=h_5=1$. Therefore the theorem above shows that $K_1(\Z G)$ has unit representation for all $G$ is the list from the introduction:
\[ \tag{*} Q_8, Q_{12}, Q_{16}, Q_{20}, \widetilde{T}, \widetilde{O}, \widetilde{I}. \]

\begin{proof}[Proof of Theorem \ref{thm:B}]
Suppose $G$ has a binary polyhedral quotient $H$ such that $m_{\H}(G)=m_{\H}(H)$. If $\Z H$ does not have SFC, then neither does $\Z G$ by Theorem \ref{thm:quotient-closed}. If $\Z H$ has SFC, then $H$ is one of the groups in $(*)$ by \cite{Sw83} and $K_1(\Z H)$ has unit representation by the discussion above. Hence $\Z G$ has SFC by Theorem \ref{thm:A}.	
\end{proof}

To prove Theorem \ref{thm:C}, it suffices to classify groups $G$ with $m_{\H}(G) \le 1$ and show that they satisfy the conditions of Theorem \ref{thm:B}. We begin by noting that $m_{\H}(Q_{4n}) = \lfloor n/2 \rfloor$ for $n \ge 2$, which can be found in \cite{Jo03}. For the exceptional groups, we have that $m_{\H}(\widetilde{T})=1$, $m_{\H}(\widetilde{O})=2$ and $m_{\H}(\widetilde{I})=2$ which can be deduced from the character tables for these groups along with their Frobenius-Schur indicators. Hence $G = Q_8, Q_{12}, \widetilde{T}$ are the only binary polyhedral groups with $m_{\H}(G)=1$. 

\begin{remark} This gives a nice way to restate Swan's determination of SFC over binary polyhedral groups in an alternate form: if $G$ is a binary polyhedral group, then $\Z G$ has SFC if and only if $m_{\H}(G) \le 2$.
\end{remark}

\begin{proof}[Proof of Theorem \ref{thm:C}]
If $G$ has $m_{\H}(G) = 0$, then $\Z G$ has SFC by the Eichler condition and so assume $m_{\H}(G)=1$. By Proposition \ref{prop:eichler-group}, we know that $G$ must have a quotient $H$ which is a binary polyhedral group. By lifting quaternionic representations, it is clear that $m_{\H}(H) \le m_{\H}(G)=1$ and so $m_{\H}(H)=1$. By the above, this shows that $H = Q_8, Q_{12}, \widetilde{T}$ and that $m_{\H}(G)=m_{\H}(H)=1$. Hence $\Z G$ has SFC by Theorem \ref{thm:B}. 	
\end{proof}

We make two brief remarks on Theorem \ref{thm:C} and its proof. Firstly, this is the best possible result of this form in the sense that, for every $n \ge 2$, there are groups $G_n$ and $H_n$ with $m_{\H}(G_n)=m_{\H}(H_n)=n$ for which $\Z G_n$ has SFC and $\Z H_n$ does not have SFC. In particular, we can take $G_n = \widetilde{T}^n$ for $n \ge 2$,  $H_2 = Q_8 \times C_2$ and $H_n = Q_{8n}$ for $n \ge 3$.

Secondly, we can avoid any dependence on results in \cite{MOV83} in the proof of Theorem \ref{thm:C}. In particular, the fact that $K_1(\Z G)$ is represented by units in the case $G = Q_8, Q_{12}, \widetilde{T}$ follows from a simpler argument. To see this, consider
\[SK_1(\Z G) = \ker(K_1(\Z G) \to K_1(\Q G)). \]
By \cite[Theorem 14.2 and Example 14.4]{Ol88},  $SK_1(\Z G)=1$ for $G = Q_8, Q_{12}, \widetilde{T}$. If $\Wh(G) = K_1(\Z G)/ \pm G$ denotes the Whitehead group then, by \cite{Ol88}, we have 
\[ \tors(\Wh(G)) = SK_1(\Z G) \quad \text{and} \quad \rank(\Wh(G))= r_{\R}(G)-r_{\Q}(G),\] 
where $r_{\F}(G)$ is the number of irreducible $\F$-representations of $G$. 
Looking at the rational and real Wedderburn decompositions of these groups shows that $r_{\R}(G)=r_{\Q}(G)$ in each case. Hence $\Wh(G) =1$ for $G = Q_8, Q_{12}, \widetilde{T}$ and we get the stronger result that $\pm G \twoheadrightarrow K_1(\Z G)$, i.e. that $K_1(\Z G)$ is represented by the trivial units.

We conclude this section with the following, which we will use in Section \ref{section:towards-full-classification}. This gives a group-theoretic interpretation of the condition that $m_{\H}(G)=m_{\H}(H)$.

\begin{prop} \label{prop:relative-eichler-group}
Let $N$ be a normal subgroup of $G$ and let $H = G/N$. Then $N$ is contained in all normal subgroups $N'$ for which $G/N'$ is binary polyhedral if and only if $m_{\H}(G)=m_{\H}(H)$.
\end{prop}

\begin{proof}
Firstly note that $m_{\H}(G) \ge m_{\H}(H)$ holds in general by lifting quaternionic representations. By looking at the real Wedderburn decomposition, every one-dimensional quaternionic representation of $G$ corresponds to a map $\varphi: G \to \H^\times$ such that the image contains an $\R$-basis for $\H$. In particular, $\im \varphi$ is a non-abelian finite subgroup of $\H^\times$ and so is a binary polyhedral group. 

Hence quaternionic representations of $G$ precisely correspond to lifts of representations from binary polyhedral groups. The result follows immediately since the fact that every quotient from $G$ to a binary polyhedral group factors through $H$ implies that any quaternionic representation over $G$ lifts from one over $H$.
\end{proof}


\section{Groups with Stably Free Cancellation} \label{section:towards-full-classification}

We now return to the question of determining the finite groups $G$ for which $\Z G$ has SFC. 
By Theorem \ref{thm:quotient-closed}, this is equivalent to determining the set $\mathcal{Q}$ of all finite groups $G$ for which $\Z G$ does not have SFC but such that $\Z H$ has SFC for all non-trivial quotients $H=G/N$. The classification would then be that $\Z G$ has SFC if and only if $G$ has no quotient in $\mathcal{Q}$. Note that $\mathcal{Q}$ is infinite since it contains an infinite subset of the groups $Q_{4n}$ for $n \ge 6$, though we omit groups such as $Q_{72}$ from the list since $Q_{72} \twoheadrightarrow Q_{24}$. Apart from this, $\mathcal{Q}$ must also contain $Q_8 \times C_2$ but it is not yet clear how many other groups $\mathcal{Q}$ contains.

\begin{question} \label{conj}
Does $\mathcal{Q}$ contain only finitely many groups not of the form $Q_{4n}$ for $n \ge 6$?
\end{question}

Fix a finite group $G$ which has no quotient of the form $Q_{4n}$ for $n \ge 6$, but is such that $\Z G$ does not have SFC. By Swan \cite{Sw83} and Theorem \ref{thm:quotient-closed}, 
$G$ has a normal subgroup $N$ for which $G/N$ is one of the groups in $(*)$, i.e. a binary polyhedral group not of the form $Q_{4n}$ for $n \ge 6$.
By Theorem \ref{thm:B} and Proposition \ref{prop:relative-eichler-group}, we may also assume $G$ has another normal subgroup $N'$ with $N \not \subseteq N'$, $N' \not \subseteq N$ and $G/N'$ a binary polyhedral group.

Hence $G$ maps onto $\overline{G}=G/(N \cap N')$, which has disjoint quotients onto $G/N$ and $G/N'$ in that the corresponding normal subgroups $N/(N \cap N')$ and $N'/(N \cap N')$ are disjoint. 
By the second isomorphism theorem, note that
\[N/(N \cap N') \cong (N \cdot N')/N' \unlhd G/N' \]
and so $\overline{G}$ is an extension of a group $G/N$ in $(*)$ by a normal subgroup of a group in $(*)$, and similarly for the map $\overline{G} \to G/N'$. Since there are only seven groups in $(*)$, there are only finitely many possible groups $\overline{G}$ which can arise. 

It is therefore of interest to determine the finite set $\mathcal{Q}_2$ of finite groups $G$ which admit two disjoint quotients onto groups in $(*)$, and then to try to determine the all $G \in \mathcal{Q}_2$ for which $\Z G$ has SFC.

\begin{remark} If every group of in $\mathcal{Q}_2$ did not have SFC,
then every element in $\mathcal{Q}$ not of the form $Q_{4n}$ for $n \ge 6$ would be in $\mathcal{Q}_2$ and so Question \ref{conj} would be answered in the affirmative. However this is false since $\widetilde{T}^2$ gives an example of a group of this form which has SFC \cite{Sw83}. \end{remark}

The simplest examples of groups in $\mathcal{Q}_2$ are the groups $G \times C_2$ for $G$ one of the groups in $(*)$. This does not have SFC when $G=Q_8$. For a more complicated example, let $\varphi : C_4 \to \Aut(C_5^2)$ be induced by sending the generator of $C_4$ to the map $(x,y) \mapsto (x^{-1},y^{-1})$. Then the semidirect product
\[ C_5^2 \rtimes_\varphi C_4 \]
has six disjoint quotients onto $Q_{20} \cong C_5 \rtimes C_4$ induced by the six subgroups $C_5 \le C_5^2$, and can be shown to have no further quotients which are binary polyhedral groups.
A full classification of this family of groups can be carried out using tables of group extensions, the details of which we leave for a later article.

More generally, one can consider the set $\mathcal{Q}_n$ of finite groups which we define inductively by letting $\mathcal{Q}_1$ be the binary polyhedral groups and then: 
\[ \mathcal{Q}_n(+) = \{ G \in \mathcal{Q}_n : \Z G \text{ has SFC} \}, \quad \mathcal{Q}_n(-) = \mathcal{Q}_n \setminus \mathcal{Q}_n(+)\] 
\[ \mathcal{Q}_n = \{ G : \exists N, N' \unlhd G \text{ such that } N \cap N' = 1, G/N, G/N' \in \bigcup_{ m< n} \mathcal{Q}_m(+)\} \setminus \bigcup_{m < n} \mathcal{Q}_m.\]
This recovers the definition of $\mathcal{Q}_2$ and is also finite since, as before, upper bounds on the sets can be determined computationally by solving the extension problems which arise. 

We will conclude this section by showing that, using the notation above, Question \ref{conj} can be reduced to obtaining a positive answer to the following two questions:

\begin{question} \label{question:q1}
Does $K_1(\Z G)$ have unit representation for all $G \in \bigcup_{n \ge 1} \mathcal{Q}_n(+)$?
\end{question}

\begin{question} \label{question:q2}
	Is $\bigcup_{n \ge 2} \mathcal{Q}_n(-)$ finite?
\end{question}

In order to see this, we will need the following observation.

\begin{prop}
If Question \ref{question:q1} has a positive answer, then:
\[ \mathcal{Q} \subseteq \bigcup_{n \ge 1} \mathcal{Q}_n(-) \subseteq \{Q_{4n} : n\ge 6 \} \cup \bigcup_{n \ge 2} \mathcal{Q}_n(-).\]
\end{prop}

\begin{proof}

If $G \in \mathcal{Q}$, then $G$ must have a quotient in $\mathcal{Q}_1(+)$. Pick the largest $n$ for which $G$ has a quotient $H=G/N$ in $\mathcal{Q}_n(+)$. By assumption, $K_1(\Z H)$ has unit representation and so $m_{\H}(G) > m_{\H}(H)$, otherwise $\Z G$ would have SFC by Theorem \ref{thm:A}. Therefore $G$ has a quotient $H'=G/N'$ in $\mathcal{Q}_m(+)$ for some $m \le n$ such that $N \not \subseteq N'$ and $N' \not \subseteq N$. 

Hence $G$ maps onto $\overline{G}=G/(N \cap N')$ which has disjoint quotients onto $H$ and $H'$ and so is in $\mathcal{Q}_r$ for some $r \le n$.
Suppose $G \ne \overline{G}$. Then $\overline{G}$ has SFC by the definition of $\mathcal{Q}$ and so $\overline{G} \in \mathcal{Q}_r(+)$. Since we can pick $H'$ to be a maximal quotient with the given property, we can assume that $m_{\H}(G)=m_{\H}(\overline{G})$ which would imply that $\Z G$ has SFC by the fact that $K_1(\Z \overline{G})$ has unit representation and Theorem \ref{thm:A}. This is a contradiction, so $G = \overline{G} \in \mathcal{Q}_r(+)$.
\end{proof}

Hence, if both Questions \ref{question:q1} and \ref{question:q2} have a positive answer, then $\mathcal{Q}$ contains only finitely many groups outside of the set $\{Q_{4n} : n \ge 6\}$ and so Question \ref{conj} would be answered in the affirmative.

\section*{Acknowledgements}

I would like to thank my supervisor F. E. A. Johnson for many interesting discussions on stable modules and the D2 problem. I would also like to thank the referee for helpful and detailed comments. 
This work was supported by the UK Engineering and Physical Sciences Research Council (EPSRC) grant EP/N509577/1.

\end{document}